\providecommand{\U}[1]{\protect\rule{.1in}{.1in}}
\newtheorem{theorem}{Theorem}
\newtheorem{definition}[theorem]{Definition}
\newtheorem{lemma}[theorem]{Lemma}
\newtheorem{proposition}[theorem]{Proposition}
\newtheorem{remark}[theorem]{Remark}
\newenvironment{proof}[1][Proof]{\noindent \textbf{#1.} }{\  \rule{0.5em}{0.5em}}
\begin{document}

\title{ Study of new class of $q-$fractional integral operator}
\author{M. Momenzadeh, N. I. Mahmudov\\Near East University\\Eastern mediterranean university \\Lefkosa, TRNC, Mersin 10, Turkey \\Famagusta, North Cyprusvia Mersin 10 Turkey \\Email: mohammad.momenzadeh@neu.edu.tr\\nazim.mahmudov@emu.edu.tr\\\ \ \ \ \ \ \ \ \ }
\maketitle

\begin{abstract}
We study the new class of $q-$fractional integral operator. In the aid of
iterated Cauchy integral approach to fractional integral operator, we applied
$t^{p}f(t)$ instead of $f(t)$ in these integrals and with parameter $p$ a new
class of $q$-fractional integral operator is introduced. Recently the
$q$-analogue of fractional differential integral operator is
studied.\cite{agarwal}\cite{mansour}\cite{rajkovic}\cite{Zhang}\cite{11}%
\cite{12}All of these operators are $q-$analogue of Riemann fractional
differential operator. The new class of introduced operatorgeneralize all
these defined operator and can be cover the $q$-analogue of Hadamard
fractional differential operator. Some properties of this operator is investigated.

Keyword: q-fractional differential integral operator, fractional calculus,
Hadamard fractional differential operator

\end{abstract}

\section{Introduction}

\bigskip Fractional calculus has a long history and has recently gone through
a period of rapid development. When Jackson (1908)\cite{jack} defined
$q$-differential operator, $q-$calculus became a bridge between mathematics
and physics. It has a lot of applications in different mathematical areas such
as combinatorics, number theory, basic hypergeometric functions and other
sciences: quantum theory, mechanics, theory of relativity, capacitor theory,
electrical circuits, particle physics, viscoelastic, electro analytical
chemistry, neurology, diffusion systems, control theory and statistics; The
$q-$Riemann-Liouville fractional integral operator was introduced by Al-Salam
\cite{alsalam}, from that time few $q$-analogues of Riemann operator were
studied. \cite{rajkovic}\cite{gamma kac}\cite{agarwal}\cite{mansour}

On the other hand, recent studies on fractional differential equations
indicate that a variety of interesting and important results concerning
existence and uniqueness of solutions, stability properties of solutions, and
analytic and numerical methods of solutions for these equations have been
obtained, and the surge for investigating more and more results is underway.
Several real world problems were modeled in the aid of using fractional
calculus. Nowadays, fractional-order differential equations can be traced in a
variety of applications such as diffusion processes, biomathematics,
thermo-elasticity, etc.\cite{applic}. However, most of the work on the topic
is based on Riemann-Liouville, and Caputo-type fractional differential
equations. $q$-analogue of this operator is defined \cite{alsalam} and
application of this operator is investigated.\cite{mansour}\cite{agarwal}%
\cite{Zhang} Another kind of fractional derivatives that appears side by side
to Riemann-Liouville and Caputo derivatives in the literature is the
fractional derivative due to Hadamard, introduced in 1892 \cite{hadamard},
which contains logarithmic function of arbitrary exponent in the kernel of the
integral appearing in its definition. In a paper from 1751, Leonhard Euler
(1707--1783) introduced the series which can be assumed as a $q-$analogue of
logarithm function. \cite{log}Since that time, a lot of mathematician have
tried to define $q-$logarithm function. Because of dificulty of working with
these function, there is no $q-$analogue of Hadamard fractional differential
integral operator. Hadamard-type integrals arise in the formulation of many
problems in mechanics such as in fracture analysis. For details and
applications of Hadamard fractional derivative and integral, we refer the
reader to a new book that gathered all of these applications.\cite{bashir}

In this paper, new $q$-integral operator is introduced, Some properties and
relations are investigated. In fact, a parameter is used to generalize the
Riemann operator and a new class of $q-$fractional difference operator is
introduced. In the first section, let us introduce some familar concepts of
q-calculus. Most of these definitions and concepts are available in \cite{kac}
and \cite{ernst}. We use $[n]_{q}$ as a $q-$analogue of any complex number.
Naturally, we can define $\left[  n\right]  _{q}!$ as
\[
\left[  a\right]  _{q}=\frac{1-q^{a}}{1-q}\ \ \ \left(  q\neq1\right)
;\ \ \ \left[  0\right]  _{q}!=1;\ \ \ \ \left[  n\right]  _{q}!=\left[
n\right]  _{q}\left[  n-1\right]  _{q}\ \ \ \ n\in\mathbb{N},\ \ a\in
\mathbb{C}\text{ }.
\]
The $q$-shifted factorial and $q$-polynomial coefficient are defined by
\begin{align*}
\left(  a;q\right)  _{0}  &  =1,\ \ \ \left(  a;q\right)  _{n}=%
{\displaystyle\prod\limits_{j=0}^{n-1}}
\left(  1-q^{j}a\right)  ,\ \ \ n\in\mathbb{N},\\
\left(  a;q\right)  _{\infty}  &  =%
{\displaystyle\prod\limits_{j=0}^{\infty}}
\left(  1-q^{j}a\right)  ,\ \ \ \ \left\vert q\right\vert <1,\ \ a\in
\mathbb{C}.
\end{align*}%
\[
\left(
\begin{array}
[c]{c}%
n\\
k
\end{array}
\right)  _{q}=\frac{\left(  q;q\right)  _{n}}{\left(  q;q\right)
_{n-k}\left(  q;q\right)  _{k}},
\]
In the standard approach to $q-$calculus, two exponential function are used
more.The first and second identities of Euler leads to two $q-$exponentials
function as follow%

\begin{align*}
e_{q}\left(  z\right)   &  =\sum_{n=0}^{\infty}\frac{z^{n}}{\left[  n\right]
_{q}!}=\prod_{k=0}^{\infty}\frac{1}{\left(  1-\left(  1-q\right)
q^{k}z\right)  },\ \ \ 0<\left\vert q\right\vert <1,\ \left\vert z\right\vert
<\frac{1}{\left\vert 1-q\right\vert },\ \ \ \ \ \ \ \\
E_{q}(z)  &  =e_{1/q}\left(  z\right)  =\sum_{n=0}^{\infty}\frac{q^{\frac
{1}{2}n\left(  n-1\right)  }z^{n}}{\left[  n\right]  _{q}!}=\prod
_{k=0}^{\infty}\left(  1+\left(  1-q\right)  q^{k}z\right)
,\ \ \ \ \ \ \ 0<\left\vert q\right\vert <1,\ z\in\mathbb{C},
\end{align*}
Let for some $0\leq\alpha<1,$ the function $|f(x)x^{\alpha}|$ is bounded on
the interval $\left(  0,A\right]  $, then Jakson integral defines as
\cite{kac}%

\[
\int\mathit{f}(x)d_{q}x=(1-q)x\sum_{i=0}^{\infty}q^{i}f(q^{i}x)
\]
converges to a function $F(x)$ on $\left(  0,A\right]  ,$ which is a
$q-$antiderivative of $f(x)$. Suppose $0<a<b$, the definite $q-$integral is
defined as%

\begin{align*}
\int\limits_{0}^{b}\mathit{f}(x)d_{q}x  &  =(1-q)b\sum_{i=0}^{\infty}%
q^{i}f(q^{i}b)\\
\int\limits_{a}^{b}\mathit{f}(x)d_{q}x  &  =\int\limits_{0}^{b}\mathit{f}%
(x)d_{q}x-\int\limits_{0}^{a}\mathit{f}(x)d_{q}x
\end{align*}

$q-$analogue of integral by part can be written as%

\begin{equation}
\int\limits_{a}^{b}g(x)D_{q}\mathit{f}(x)d_{q}x=\left(
g(b)f(b-g(a)f(a)\right)  -\int\limits_{a}^{b}f(qx)D_{q}\mathit{g}(x)d_{q}x
\end{equation}

In addition, we can interchange the order of double q-integral by%

\begin{equation}
\int\limits_{0}^{x}\int\limits_{0}^{v}\mathit{f}(s)d_{q}sd_{q}v=\int
\limits_{0}^{x}\int\limits_{qs}^{x}\mathit{f}(s)d_{q}vd_{q}s
\end{equation}

Actually this interchange of order is true, since%

\begin{equation}
\int\limits_{0}^{x}\int\limits_{qs}^{x}\mathit{f}(s)d_{q}vd_{q}s=\int
\limits_{0}^{x}\left(  x-qs\right)  f(s)d_{q}s=x(1-q)\sum_{i=0}^{\infty}%
q^{i}f(q^{i}x)\left(  x-q^{i+1}x\right)  =x^{2}(1-q)^{2}\sum_{i=0}^{\infty
}q^{i}f(q^{i}x)\left(  \sum_{j=0}^{\infty}q^{j}\right)
\end{equation}

In addition, the left side can be written as%

\begin{equation}
\int\limits_{0}^{x}\int\limits_{0}^{v}\mathit{f}(s)d_{q}sd_{q}v=x(1-q)\sum
_{i=0}^{\infty}q^{i}\int\limits_{0}^{xq^{i}}f(s)d_{q}s=x^{2}(1-q)^{2}%
\sum_{i=0}^{\infty}\sum_{j=0}^{\infty}q^{i+2j}f(q^{i+j}x)
\end{equation}

\bigskip Let $i+j=t$ to see that this releation is true. $q-$analogue of Gamma
function is defined as%

\[
\Gamma_{q}(t)=\int\limits_{0}^{\frac{1}{1-q}}\mathit{x}^{t-1}E_{q}(-qx)d_{q}x
\]

Several $q$-exponential functions for different purposes were defined.
\cite{unification}Natural question is appeard about this definition of
q-exponential function, Why did we use $E_{q}(x)$ in the definition of q-Gamma
function? Next lemma answer this question.

\begin{lemma}
For given q-Gamma function $\Gamma_{q}(t+1)=\left[  t\right]  _{q}\Gamma
_{q}(t)$ and this relation is true only where we use $E_{q}(-qx)$ instead of
q-exponential function.
\end{lemma}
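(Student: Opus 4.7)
The plan is to prove the functional equation by $q$-integration by parts, and then to show the special role of $E_{q}(-qx)$ by checking what goes wrong with the other natural candidates.

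First I would set up $\Gamma_{q}(t+1)=\int_{0}^{1/(1-q)}x^{t}E_{q}(-qx)\,d_{q}x$ and apply the $q$-integration by parts formula (1) with $g(x)=x^{t}$ and $f(x)=-E_{q}(-x)$. The crucial observation, which I would verify as a short preliminary computation from the infinite-product representation, is that
\[
E_{q}(-x)-E_{q}(-qx)=-(1-q)x\,E_{q}(-qx),
\]
so that $D_{q}f(x)=-D_{q}E_{q}(-x)=E_{q}(-qx)$, which is exactly the integrand of $\Gamma_{q}(t+1)$. After integration by parts, $f(qx)=-E_{q}(-qx)$ reappears in the remaining integral together with $D_{q}g(x)=[t]_{q}x^{t-1}$, producing $[t]_{q}\,\Gamma_{q}(t)$. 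The boundary term vanishes: at $x=0$ the factor $x^{t}$ kills it (for $t>0$), and at $x=1/(1-q)$ we have $E_{q}(-x)=\prod_{k=0}^{\infty}(1-q^{k})=0$ because of the $k=0$ factor. This gives the functional equation.

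For the uniqueness clause, I would test the other natural candidates one by one and show that the integration by parts fails to close. If one replaces $E_{q}(-qx)$ by $E_{q}(-x)$, then the antiderivative of $E_{q}(-x)$ is (up to constants) $-qE_{q}(-x/q)$, and integration by parts produces a factor of $q$, yielding $\tilde\Gamma_{q}(t+1)=q[t]_{q}\tilde\Gamma_{q}(t)$ rather than $[t]_{q}\tilde\Gamma_{q}(t)$. If one instead uses $e_{q}(-x)$, then from $D_{q}e_{q}(-x)=-e_{q}(-x)$ the antiderivative is $-e_{q}(-x)$, but the $q$-integration by parts formula forces the evaluation $f(qx)=-e_{q}(-qx)$, so that the right side becomes $[t]_{q}\int x^{t-1}e_{q}(-qx)\,d_{q}x$; the integrand is no longer $e_{q}(-x)$, so the recursion does not close onto $\tilde\Gamma_{q}(t)$ itself. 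The case $e_{q}(-qx)$ is analogous. Hence $E_{q}(-qx)$ is the unique choice among these four for which the dilation $x\mapsto qx$ built into the $q$-integration by parts rule matches the dilation already present in the exponential, making the integrand reproduce itself.

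The main obstacle, and the one I would present most carefully, is isolating precisely why $E_{q}(-qx)$ is the fixed point of this procedure: the mechanism is that $D_{q}[-E_{q}(-x)]$ and the evaluation shift $f\mapsto f(qx)$ in formula (1) together transform $-E_{q}(-x)$ into $-E_{q}(-qx)$, so the integrand returns unchanged up to sign. I would emphasize this by writing the identity $E_{q}(-x)=(1-(1-q)x)E_{q}(-qx)$ and noting that only the ``big'' $q$-exponential $E_{q}$ satisfies the shift relation whose derivative hits precisely the argument $-qx$, which is the one appearing after the forced $q$-dilation in the IBP formula; the small exponential $e_{q}$ has $D_{q}e_{q}(x)=e_{q}(x)$ without any shift, so no such fixed-point structure is available.
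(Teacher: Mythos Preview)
Your derivation of the recurrence $\Gamma_{q}(t+1)=[t]_{q}\Gamma_{q}(t)$ matches the paper's: both use $q$-integration by parts together with $D_{q}E_{q}(-x)=-E_{q}(-qx)$, and your explicit check of the boundary terms is a welcome addition that the paper leaves implicit.

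For the uniqueness clause, however, your route differs from the paper's. You argue by elimination, testing the three other ``natural'' candidates $E_{q}(-x)$, $e_{q}(-x)$, $e_{q}(-qx)$ and showing that in each case the $q$-dilation built into formula~(1) prevents the recursion from closing. The paper instead extracts from the integration-by-parts computation the $q$-difference equation that \emph{any} admissible weight must satisfy, namely $D_{q}\bigl(f(q^{-1}x)\bigr)=-f(x)$, rewrites it as the multiplicative recursion $f(x)\,[1+(q-1)x]=f(q^{-1}x)$, iterates, and passes to the limit to conclude that $f$ is determined up to a scalar as $f(0)\,E_{q}$. Thus the paper obtains genuine uniqueness among all (suitably regular) functions, not merely among the four standard $q$-exponentials. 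Your case-check is correct and suffices for the lemma as literally stated, but it is logically weaker: it leaves open whether some other function outside your list could also work. If you want to match the paper's strength, replace the case analysis by the functional-equation argument; conversely, your approach has the advantage of making concrete and visible exactly \emph{how} each wrong candidate fails (an extra factor of $q$, or a mismatched shift in the argument), which the paper's abstract characterisation does not display.
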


\begin{proof}
Use q-integral by part and the fact that $D_{q}$\bigskip$(E_{q}(-x))=-E_{q}%
(-qx)$ to find the recurrence formula for q-gamma function.%

\[
\Gamma_{q}(t+1)=-\int\limits_{0}^{\frac{1}{1-q}}\mathit{x}^{t}d_{q}%
E_{q}(-x)=0+\left[  t\right]  _{q}\int\limits_{0}^{\frac{1}{1-q}}%
\mathit{x}^{t-1}E_{q}(-qx)d_{q}x=\left[  t\right]  _{q}\Gamma_{q}(t)
\]

Now if we assume that $f(x)=E_{q}(x)$, to have the above property, function
$f(x)$ should satisfies the following q-difference equation%

\begin{equation}
D_{q}\bigskip(f(q^{-1}x))=-f(x)
\end{equation}

If we apply q-derivative formula for this, we can see that $f(x)\left[
1+(q-1)x\right]  =f(q^{-1}x).$ If we rewrite this equation for $f(q^{-j}x)$
and tend $j$ to infinity, then we have%

\begin{equation}
f(x)=\frac{f(0)}{\prod_{k=0}^{\infty}\left(  1+\left(  q-1\right)
q-^{k}x\right)  }=f(0)e_{q^{-1}}\left(  x\right)  =f(0)E_{q}(x)
\end{equation}

Which shows the uniqueness of $E_{q}(-qx)$ in this definition.
\end{proof}

\bigskip Actually, the authors in \cite{gamma kac} define another version of
$q$-Gamma function. In that paper, Authors defined $q$-Gamma function such
that the classic results satisfied. This definition is based on $e_{q}\left(
x\right)  $ and as we mentioned it in the last lemma, the recurence formula is
not given the same terms like in lemma. In fact,%

\[
\widetilde{\gamma}_{q}^{(A)}(t+1)=q^{-t}\left[  t\right]  _{q}\widetilde
{\gamma}_{q}^{(A)}(t)
\]

Where $\widetilde{\gamma}_{q}^{(A)}(t)=\int\limits_{0}^{\frac{\infty}{A\left(
1-q\right)  }}\mathit{x}^{t-1}e_{q}(-x)d_{q}x$. The limit boundary of
integration is changed also and author used the interesting function as a kind
of weight and make the equivalent definition for $q$-gamma function.

$q$-shifted factorial may extend to the following definition%

\begin{equation}
(x-a)^{\left(  \alpha\right)  }=x^{\alpha}\prod_{k=0}^{\infty}\frac{\left(
1-\frac{x}{a}q^{k}\right)  }{\left(  1-\frac{x}{a}q^{k+\alpha}\right)  }%
=\frac{x^{\alpha}\left(  \frac{x}{a};q\right)  _{\infty}}{\left(  q^{\alpha
}\frac{x}{a};q\right)  _{\infty}}%
\end{equation}

We can rewrite the $q$-Gamma function by using this definition as\cite{gamma
kac}
\begin{equation}
\Gamma_{q}(t)=\frac{(1-q)^{\left(  t-1\right)  }}{(1-q)^{t-1}}%
\end{equation}

Let us generalize definition (7) to the following form%

\begin{equation}
(x-y)_{q^{p+1}}^{\left(  \alpha\right)  }=x^{\alpha}\prod_{k=0}^{\infty}%
\frac{\left(  x-y\left(  q^{p+1}\right)  ^{k}\right)  }{\left(  x-y\left(
q^{p+1}\right)  ^{k+\alpha}\right)  }=\frac{x^{\alpha}\left(  \frac{y}%
{x};q^{p+1}\right)  _{\infty}}{\left(  q^{\alpha\left(  p+1\right)  }\frac
{y}{x};q^{p+1}\right)  _{\infty}}%
\end{equation}

In addition, we define another version of $q-$analogue of exponent as%

\begin{align}
\left[  p+1\right]  ^{\left(  \alpha\right)  }  &  =\prod_{k=1}^{\infty}%
\frac{\left[  p+1\right]  _{q^{k}}}{\left[  p+1\right]  _{q^{k+\alpha}}}%
=\frac{\left(  1-q^{p+1}\right)  \left(  1-q^{2p+2}\right)  ...}{\left(
1-q^{\left(  \alpha+1\right)  (p+1)}\right)  \left(  1-q^{\left(
\alpha+2\right)  (p+1)}\right)  ...}\frac{(1-q^{\alpha+1})(1-q^{\alpha+2}%
)...}{(1-q)(1-q^{2})...}\\
&  =\frac{\left(  1-q^{p+1}\right)  _{q^{p+1}}^{\left(  \alpha\right)  }%
}{\left(  1-q\right)  ^{\left(  \alpha\right)  }}=\frac{\left(  1-q^{p+1}%
\right)  _{q^{p+1}}^{\left(  \alpha\right)  }}{\left(  1-q\right)  ^{\alpha
}\Gamma_{q}(\alpha+1)}=\frac{\Gamma_{q^{p+1}}(\alpha+1)}{\Gamma_{q}(\alpha
+1)}\left(  \left[  p+1\right]  _{q}\right)  ^{\alpha}%
\end{align}

\bigskip In addition, $q-$Beta function is defined as%

\[
\beta_{q}(t,s)=\int\limits_{0}^{1}\mathit{x}^{t-1}\left(  1-qx\right)
_{q}^{s-1}d_{q}x=\frac{\Gamma_{q}(t)\Gamma_{q}(s)}{\Gamma_{q}(t+s)}%
\]

\section{Itterated q-integral to approach new class of operators}

\bigskip There are several approaches to fractional differential operators.
One of demonstration methods of fractional differential equation is using the
itterated Cauchy integrals. The Riemann--Liouville fractional integral is a
generalization of the following itterated Cauchy integral:%

\[
\int\limits_{a}^{x}\mathit{dt}_{1}\int\limits_{a}^{\mathit{t}_{1}}%
\mathit{dt}_{2}...\int\limits_{a}^{\mathit{t}_{n-1}}f(t_{n})\mathit{dt}%
_{n}=\frac{1}{\Gamma(n)}\int\limits_{a}^{x}\left(  x-t\right)  ^{n-1}f(t)dt
\]

In the aid of this formula, for any positive real value $0<\alpha,$ we have%

\[
_{a}I^{\alpha}(f(x))=\frac{1}{\Gamma(\alpha)}\int\limits_{a}^{x}\left(
x-t\right)  ^{\alpha-1}f(t)\mathit{dt}%
\]

If we put $\frac{1}{t_{i}}$ in the chain of integration, then we can reach to
Hadamard operator. The related itterated integral can be written as%

\[
\int\limits_{a}^{x}\frac{1}{t_{1}}\mathit{dt}_{1}\int\limits_{a}%
^{\mathit{t}_{1}}\frac{1}{t_{2}}\mathit{dt}_{2}...\int\limits_{a}%
^{\mathit{t}_{n-1}}\frac{1}{t_{n}}f(t_{n})\mathit{dt}_{n}=\frac{1}{\Gamma
(n)}\int\limits_{a}^{x}\left(  Log\left(  \frac{x}{t}\right)  \right)
^{n-1}f(t)\frac{dt}{t}%
\]

Therefore, Hadamard fractional integral can be written as \cite{hadamard}%

\[
_{a}J^{\alpha}(f(x))=\frac{1}{\Gamma(\alpha)}\int\limits_{a}^{x}\left(
Log\left(  \frac{x}{t}\right)  \right)  ^{\alpha-1}f(t)\frac{dt}{t}%
\]

The author in \cite{katagumpala} assumed $t_{i}^{p}$ in the chain of
integration and reached to the general formula for fractional integral
operator.There are four different models of $q$-analogues of Riemann-Liouville
fractional integral operators. No one has investigated the Hadamard type and
there is not any $q$-analogue of this operator. First let us rewrite the
definition of $q-$fractional integral operator in all introduced forms. In
fact, for $\alpha\geq0$ and $f:\left[  a,b\right]  \rightarrow%
\mathbb{R}
,$ the $\alpha$ order fractional $q$-integral of a function $f(x)$ is defined
by
\[
I_{q,a}^{\alpha}(f(x))=\frac{1}{\Gamma_{q}(\alpha)}\int\limits_{a}^{x}%
K_{q}(t,x)f(t)\mathit{d}_{q}\mathit{t}%
\]

The kernel of this integral is defined as $K_{q}(t,x)=(x-qt)^{\left(
\alpha-1\right)  }$ \cite{rajkovic}, $K_{q}(t,x)=(x-qt)_{q}^{\alpha-1}$
\cite{gamma kac}, $K_{q}(t,x)=x^{\alpha-1}(\frac{qt}{x};q)_{\alpha-1}$
\cite{mansour}, and $K_{q}(t,x)=(x-qt)_{\alpha-1}$ \cite{agarwal}. There is a
good discussion about the form of the kernel function at \cite{Zhang}.

In this section, we investigate the new definition for q-integral operator. In
the aid of definition of Hadamard operator as itterated integral for the case
$\alpha\in%
\mathbb{N}
.$ We rewrite this integrals by using Jackson integral instead of Riemann
integral. We will use the technique of interchanging the order of q-integral
which is mentioned in the last section. So for $n=2,$ we have:
\[
J_{p,q}^{2}\left(  f(a)\right)  =\int\limits_{0}^{a}\int\limits_{0}%
^{\mathit{x}}\mathit{x}^{p}y^{p}f(y)\mathit{d}_{q}\mathit{yd}_{q}%
\mathit{x}=\int\limits_{0}^{a}\int\limits_{qy}^{\mathit{a}}\mathit{x}^{p}%
y^{p}f(y)\mathit{d}_{q}\mathit{xd}_{q}\mathit{y=}\frac{1}{\left[  p+1\right]
_{q}}\int\limits_{0}^{a}y^{p}f(y)\left[  a^{p+1}-q^{p+1}y^{p+1}\right]
\mathit{d}_{q}\mathit{y}%
\]

Now Consider the case $n=3$, then we have%

\begin{align*}
J_{p,q}^{3}\left(  f(a)\right)   &  =\int\limits_{0}^{a}\int\limits_{0}%
^{\mathit{x}}\int\limits_{0}^{\mathit{y}}\mathit{x}^{p}y^{p}z^{p}%
f(z)\mathit{d}_{q}\mathit{zd}_{q}\mathit{yd}_{q}\mathit{x}=\int\limits_{0}%
^{a}\int\limits_{qz}^{\mathit{a}}\int\limits_{qz}^{\mathit{x}}\mathit{x}%
^{p}y^{p}z^{p}f(z)\mathit{d}_{q}\mathit{yd}_{q}\mathit{x\mathit{d}%
_{q}\mathit{z}}\\
&  =\frac{1}{\left[  p+1\right]  _{q}\left[  2p+2\right]  _{q}}\int
\limits_{0}^{a}z^{p}f(z)\left[  \left(  a^{p+1}\right)  ^{2}-\frac{\left[
2p+2\right]  _{q}}{\left[  p+1\right]  _{q}}\left(  zq\right)  ^{p+1}%
a^{p+1}+\left(  \frac{\left[  2p+2\right]  _{q}}{\left[  p+1\right]  _{q}%
}-1\right)  \left(  \left(  zq\right)  ^{p+1}\right)  ^{2}\right]
\mathit{d}_{q}\mathit{z}%
\end{align*}

we can simplify this with the little calculation as%

\[
J_{p,q}^{3}\left(  f(a)\right)  =\mathit{=}\frac{1}{\left[  p+1\right]
_{q}\left[  2p+2\right]  _{q}}\int\limits_{0}^{a}z^{p}f(z)\left(
a^{p+1}-\left(  zq\right)  ^{p+1}\right)  \left(  a^{p+1}-\left(  zq\right)
^{p+1}q^{p+1}\right)  \mathit{d}_{q}\mathit{z}%
\]

For case $n=4$, we have%

\begin{align*}
J_{p,q}^{4}\left(  f(a)\right)   &  =\int\limits_{0}^{a}\int\limits_{0}%
^{\mathit{x}}\int\limits_{0}^{\mathit{y}}\int\limits_{0}^{\mathit{z}%
}\mathit{x}^{p}y^{p}z^{p}w^{p}f(w)\mathit{d}_{q}\mathit{wd}_{q}\mathit{zd}%
_{q}\mathit{yd}_{q}\mathit{x}=\int\limits_{0}^{a}\int\limits_{qw}^{\mathit{a}%
}\int\limits_{qw}^{\mathit{x}}\int\limits_{qw}^{\mathit{y}}\mathit{x}^{p}%
y^{p}z^{p}w^{p}f(w)\mathit{d}_{q}\mathit{yd}_{q}\mathit{x\mathit{d}%
_{q}\mathit{zd}_{q}\mathit{w}}\\
&  =\frac{1}{\left[  p+1\right]  _{q}\left[  2p+2\right]  _{q}\left[
3p+3\right]  _{q}}\int\limits_{0}^{a}w^{p}f(w)\left[  \left(  a^{p+1}\right)
^{3}-\left(  1+q^{p+1}+q^{2p+2}\right)  \left(  wq\right)  ^{p+1}\left(
a^{p+1}\right)  ^{2}+\right. \\
&  \left.  \left(  q^{p+1}+q^{2p+2}+q^{3p+3}\right)  \left(  \left(
wq\right)  ^{p+1}\right)  ^{2}a^{p+1}-q^{3p+3}\left(  \left(  wq\right)
^{p+1}\right)  ^{3}\right]  \mathit{d}_{q}\mathit{w}%
\end{align*}

Now the little calculation, shows that%

\begin{align*}
J_{p,q}^{4}\left(  f(a)\right)   &  =\frac{1}{\left[  p+1\right]  _{q}\left[
2p+2\right]  _{q}\left[  3p+3\right]  _{q}}\int\limits_{0}^{a}w^{p}f(w)\left(
a^{p+1}-\left(  wq\right)  ^{p+1}\right)  \times\\
&  \left(  a^{p+1}-\left(  wq\right)  ^{p+1}q^{p+1}\right)  \left(
a^{p+1}-\left(  wq\right)  ^{p+1}q^{2p+2}\right)  \mathit{d}_{q}\mathit{w}%
\end{align*}

We can easily see that for any natural number $k\in%
\mathbb{N}
,$ in the aid of induction we have:%

\[
J_{p,q}^{k}\left(  f(a)\right)  =\frac{1}{\prod_{n=1}^{k-1}\left[  n\left(
p+1\right)  \right]  _{q}}\int\limits_{0}^{a}w^{p}f(w)\prod_{n=0}^{k-1}\left(
a^{p+1}-\left(  wq\right)  ^{p+1}q^{n\left(  p+1\right)  }\right)
\mathit{d}_{q}\mathit{w}%
\]

This relation for itteration integral motivate us to define $q$-analogue of
integral operator as follow

\begin{definition}
for $\alpha>0$ and $x>0,$ if $f(x)$ satisfies the condistion of existance for
following Jackson integral, we define $q$-fractional integral as%

\begin{align*}
J_{p,q}^{\alpha}\left(  f(a)\right)   &  =\frac{1}{\left[  p+1\right]
^{\left(  \alpha-1\right)  }\Gamma_{q}(\alpha)}\int\limits_{0}^{a}%
w^{p}f(w)(a^{p+1}-\left(  wq\right)  ^{p+1})_{q^{p+1}}^{\left(  \alpha
-1\right)  }\mathit{d}_{q}\mathit{w}\\
&  \mathit{=}\frac{\left(  1-q\right)  ^{\alpha-1}}{\left(  1-q^{p+1}\right)
_{q^{p+1}}^{\left(  \alpha-1\right)  }}\int\limits_{0}^{a}w^{p}f(w)(a^{p+1}%
-\left(  wq\right)  ^{p+1})_{q^{p+1}}^{\left(  \alpha-1\right)  }%
\mathit{d}_{q}\mathit{w}\\
&  \mathit{=}\frac{\left(  \left[  p+1\right]  _{q}\right)  ^{1-\alpha}%
}{\Gamma_{q^{p+1}}(\alpha)}\int\limits_{0}^{a}w^{p}f(w)(a^{p+1}-\left(
wq\right)  ^{p+1})_{q^{p+1}}^{\left(  \alpha-1\right)  }\mathit{d}%
_{q}\mathit{w}%
\end{align*}

\begin{remark}
First, we should clear some identities. For any natural number $k\in%
\mathbb{N}
$ we have:
\[
\left[  k\left(  p+1\right)  \right]  _{q}=\frac{1-q^{k\left(  p+1\right)  }%
}{1-q}=\frac{1-q^{k\left(  p+1\right)  }}{1-q^{k}}\frac{1-q^{k}}{1-q}=\left[
p+1\right]  _{q^{k}}\left[  k\right]  _{q}%
\]

In addition, this definition is really the unification of $q$-analogue of
Reimann and Hadamard integral operator. For instance, let $q\rightarrow1^{-}$
then we have%

\[
\lim_{q\rightarrow1^{-}}J_{p,q}^{\alpha}\left(  f(a)\right)  =\frac{\left(
p+1\right)  ^{1-\alpha}}{\Gamma(\alpha)}\int\limits_{0}^{a}w^{p}f(w)\left(
a^{p+1}-w^{p+1}\right)  ^{\alpha-1}\mathit{dw}%
\]

This is exactly as the same as operator which is introduced at
\cite{katagumpala}. If we let $p\rightarrow-1^{+}$ and use Hopital, we have%

\[
\lim_{p\rightarrow-1^{+}}\lim_{q\rightarrow1^{-}}J_{p,q}^{\alpha}\left(
f(a)\right)  =\frac{1}{\Gamma(\alpha)}\int\limits_{0}^{a}\lim_{p\rightarrow
-1^{+}}\left(  \frac{a^{p+1}-w^{p+1}}{p+1}\right)  ^{\alpha-1}w^{p}%
f(w)\mathit{dw=}\frac{1}{\Gamma(\alpha)}\int\limits_{0}^{a}\left(  Log\left(
\frac{a}{w}\right)  \right)  ^{\alpha-1}f(w)\frac{dw}{w}%
\]

When $p=0$, we arrive at the well-known $q-$fractional Reimann \ integral
\cite{rajkovic}.
\end{remark}

\begin{remark}
The new operator can be expanded as%
\[
J_{p,q}^{\alpha}\left(  f(x)\right)  =\frac{x^{\alpha(p+1)}}{\left(
1-q\right)  ^{\alpha-2}\left(  1-q^{p+1}\right)  _{q^{p+1}}^{\left(
\alpha-1\right)  }}%
{\displaystyle\sum\limits_{i=0}^{\infty}}
\left(  q^{p+1}\right)  ^{i}f(xq^{i})(1-\left(  q^{i+1}\right)  ^{p+1}%
)_{q^{p+1}}^{\left(  \alpha-1\right)  }%
\]

In addition, we may express the $q$-Reimann type operator for $q^{p+1}$ as
follow
\[
I_{q^{p+1}}^{\alpha}\left(  f(x)\right)  =\frac{\left(  1-q\right)  ^{\alpha
}x^{\alpha}}{\left(  1-q^{p+1}\right)  _{q^{p+1}}^{\left(  \alpha-1\right)  }}%
{\displaystyle\sum\limits_{i=0}^{\infty}}
\left(  q^{p+1}\right)  ^{i}f(xq^{i(p+1)})(1-\left(  q^{i+1}\right)
^{p+1})_{q^{p+1}}^{\left(  \alpha-1\right)  }%
\]

These expressions shows that the new operator is not only the simple
modification of the available operator.
\end{remark}
\end{definition}

\section{Some properties of new q-fractional integral operator}

In this section, we study some familar properties of fractional integral
operator as semi-group properties of this operator. This property is
essentially useful, because to solve related $q-$difference equation we should
apply this property. In addition, we will define inverse operator as
$q$-fractional derivative and at the end, properties of these operators will
be studied. In this procedure, we study some useful identities and relations
as well. $q-$fractional Reimann integral operators were extensively
investigated in several resources.\cite{rajkovic} In the aid of Hine's
transform for q-hypergeometric functions, useful identities were introduced
and a lot of identities were studied.\cite{rajkovic} \cite{mansour} We start
with the following lemma from \cite{rajkovic}. This relation is important and
make a rule like beta function. Most of $q$-analogue of ordinary cases can be
interpreted in the aid of this lemma, We will introduce the sequence of
identities to achive semi-group property.

\begin{lemma}
For $\alpha,\beta,\mu\in%
\mathbb{R}
^{+}$, the following identity is valid\cite{rajkovic}
\end{lemma}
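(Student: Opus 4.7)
The plan is to reduce the identity to an evaluation of a basic hypergeometric series, since the lemma is described as playing the role of the $q$-beta identity in deriving the semi-group property. First I would expand the $q$-integral on the left-hand side through Jackson's definition, turning it into an infinite sum indexed by the grid $\{xq^{i}\}_{i\geq 0}$. After substituting each power $t = xq^{i}$ into the integrand, the factors of the form $(x-qt)^{(\alpha-1)}$ should be rewritten using identity (7), namely $(x-qt)^{(\alpha-1)} = x^{\alpha-1}(qt/x;q)_{\infty}/(q^{\alpha}t/x;q)_{\infty}$, so that the sum collapses into a ratio of $q$-shifted factorials indexed by $i$.

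Next I would recognize the resulting series as a ${}_{2}\phi_{1}$ or ${}_{3}\phi_{2}$ with a specific choice of parameters coming from $\alpha$, $\beta$, $\mu$. At this stage, one applies a classical summation identity (Heine's transformation or the $q$-Gauss formula) to close the series in terms of $(q^{\alpha};q)_{\infty}$, $(q^{\beta};q)_{\infty}$, $(q^{\mu};q)_{\infty}$, and $(q^{\alpha+\beta+\mu};q)_{\infty}$. Regrouping these $q$-infinite products through the relation $\Gamma_{q}(t) = (1-q)^{(t-1)}/(1-q)^{t-1}$ from identity (8) should give the advertised closed form in terms of $\Gamma_{q}$.

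An alternative route, if the direct series manipulation becomes opaque, is to fix two of the parameters and treat the identity as a function of $x$, then apply $D_{q}$ to both sides using the $q$-integration-by-parts formula (1). This should produce a first-order $q$-difference equation that both sides satisfy; combining with a boundary evaluation as $x \to 0$ (where most terms vanish on account of the positive real parts of $\alpha, \beta, \mu$) pins down the identity uniquely.

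The main obstacle will be the bookkeeping of the $q$-shifted factorials. The presence of three parameters, together with the repeated shifts by $q$ inside the kernels $(x-qt)^{(\alpha-1)}$, produces several $(q^{\bullet}t/x;q)_{\infty}$ factors that must be carefully cancelled; a sign error or a misaligned shift would produce an extra factor of $q^{\alpha\mu}$ or similar and destroy the symmetry with the $q$-beta function. The cleanest way to avoid this is to normalize everything to the $(\cdot\,;q)_{\infty}$ product form at the start, keep all shifts explicit, and only then collapse to $\Gamma_{q}$ at the end.
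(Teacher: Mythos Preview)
The paper does not prove this lemma at all: it is quoted from Rajkovi\'{c}--Marinkovi\'{c}--Stankovi\'{c}, and the only methodological hint is the surrounding sentence that ``in the aid of Hine's transform for $q$-hypergeometric functions, useful identities were introduced.'' So there is no in-paper argument to compare against beyond that citation, and your instinct to reduce to a ${}_{2}\phi_{1}$ and invoke Heine or the $q$-Gauss sum is exactly the standard route in the cited source.

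That said, your description of the starting point is garbled. The left-hand side of the lemma is already an infinite \emph{series} in an integer index $t$,
\[
\sum_{t=0}^{\infty}\frac{(1-\mu q^{1-t})^{(\alpha-1)}(1-q^{1+t})^{(\beta-1)}}{(1-q)^{(\alpha-1)}(1-q)^{(\beta-1)}}\,(q^{t})^{\alpha}
=\frac{(1-\mu q)^{(\alpha+\beta-1)}}{(1-q)^{(\alpha+\beta-1)}},
\]
not a Jackson integral in a continuous variable, and the factors present are $(1-\mu q^{1-t})^{(\alpha-1)}$ and $(1-q^{1+t})^{(\beta-1)}$, not $(x-qt)^{(\alpha-1)}$. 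There is no $x$ here, so your alternative plan of applying $D_{q}$ in $x$ and matching a boundary value has nothing to act on. You appear to have merged this lemma with the \emph{next} one in the paper, the Jackson-integral evaluation
\[
\int_{a}^{x}t^{p}(x^{p+1}-(qt)^{p+1})_{q^{p+1}}^{(\alpha-1)}(t^{p+1}-a^{p+1})_{q^{p+1}}^{(\lambda)}\,d_{q}t,
\]
which the paper \emph{does} prove by expanding the Jackson integral and then invoking the present series identity (after the substitution $q\mapsto q^{p+1}$). In other words, the step you describe as ``first expand the $q$-integral into a sum'' is precisely how the paper gets \emph{from} the later lemma \emph{to} this one, not a step in proving this one.

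Once the setup is corrected, your plan is fine: rewrite each $(1-\cdot)^{(\nu)}$ through (7) as a ratio of $(\,\cdot\,;q)_{\infty}$ products, identify the series as a ${}_{2}\phi_{1}$ with argument determined by $\mu$, and apply the $q$-Gauss summation. The closed form $(1-\mu q)^{(\alpha+\beta-1)}/(1-q)^{(\alpha+\beta-1)}$ then drops out directly, with no need to pass through $\Gamma_{q}$ at all.
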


\[
\sum\limits_{t=0}^{\infty}\frac{(1-\mu q^{1-t})^{\left(  \alpha-1\right)
}(1-q^{1+t})^{\left(  \beta-1\right)  }}{(1-q)^{\left(  \alpha-1\right)
}(1-q)^{\left(  \beta-1\right)  }}\left(  q^{t}\right)  ^{\alpha}=\frac{(1-\mu
q)^{\left(  \alpha+\beta-1\right)  }}{(1-q)^{\left(  \alpha+\beta-1\right)  }}%
\]

\bigskip

\begin{remark}
We mention that the terms at the doniminator of summation is not related to
$t$ and we can take it out from the summation. Now, we modify the lemma to
prepare the suitable conditions for using in our operator. For this reason,
let us substitute $q$ by $q^{p+1}$ in the last lemma. In the aid of definition
that is mentioned by identity (9), we have :%
\[
\sum\limits_{t=0}^{\infty}(1-\mu\left(  q^{p+1}\right)  ^{1-t})_{q^{p+1}%
}^{\left(  \alpha-1\right)  }(1-\left(  q^{p+1}\right)  ^{1+t})_{q^{p+1}%
}^{\left(  \beta-1\right)  }\left(  q^{1+p}\right)  ^{t\alpha}=\frac
{(1-q^{p+1})_{q^{p+1}}^{\left(  \alpha-1\right)  }(1-q^{p+1})_{q^{p+1}%
}^{\left(  \beta-1\right)  }}{(1-q^{p+1})_{q^{p+1}}^{\left(  \alpha
+\beta-1\right)  }}(1-\mu q^{p+1})_{q^{p+1}}^{\left(  \alpha+\beta-1\right)  }%
\]

\end{remark}

In this step, let us calculate the following q-integral by using the last remark.

\begin{lemma}
Following Jackson integral for real positive $\alpha$ and $\lambda>-1$ holds true:%

\[
\int\limits_{a}^{x}t^{p}(x^{p+1}-\left(  qt\right)  ^{p+1})_{q^{p+1}}^{\left(
\alpha-1\right)  }(t^{p+1}-a^{p+1})_{q^{p+1}}^{\left(  \lambda\right)  }%
d_{q}t=(1-q)\left(  \frac{(1-q^{p+1})_{q^{p+1}}^{\left(  \alpha-1\right)
}(1-q^{p+1})_{q^{p+1}}^{\left(  \lambda\right)  }}{(1-q^{p+1})_{q^{p+1}%
}^{\left(  \alpha+\lambda\right)  }}\right)  \left[  (x^{p+1}-a^{p+1}%
)_{q^{p+1}}^{\left(  \alpha+\lambda\right)  }\right]
\]

\begin{proof}
In the aid of definition of Jackson integral, left hand side of this
inequality can be written as%

\begin{align*}
&  \int\limits_{a}^{x}t^{p}(x^{p+1}-\left(  qt\right)  ^{p+1})_{q^{p+1}%
}^{\left(  \alpha-1\right)  }(t^{p+1}-a^{p+1})_{q^{p+1}}^{\left(
\lambda\right)  }d_{q}t\\
&  =\int\limits_{0}^{x}t^{p}(x^{p+1}-\left(  qt\right)  ^{p+1})_{q^{p+1}%
}^{\left(  \alpha-1\right)  }(t^{p+1}-a^{p+1})_{q^{p+1}}^{\left(
\lambda\right)  }d_{q}t-\int\limits_{0}^{a}t^{p}(x^{p+1}-\left(  qt\right)
^{p+1})_{q^{p+1}}^{\left(  \alpha-1\right)  }(t^{p+1}-a^{p+1})_{q^{p+1}%
}^{\left(  \lambda\right)  }d_{q}t
\end{align*}

Here, the second integral is zero, because for some $i\in%
\mathbb{N}
$ the factor $(\left(  aq^{i}\right)  ^{p+1}-a^{p+1})_{q^{p+1}}^{\left(
\lambda\right)  }=0$ and we can expand the integral as%

\begin{align*}
\int\limits_{0}^{a}t^{p}(x^{p+1}-\left(  qt\right)  ^{p+1})_{q^{p+1}}^{\left(
\alpha-1\right)  }(t^{p+1}-a^{p+1})_{q^{p+1}}^{\left(  \lambda\right)  }%
d_{q}t  &  =\\
a(1-q)\sum\limits_{i=0}^{\infty}q^{i}\left(  aq^{i}\right)  ^{p}%
(x^{p+1}-\left(  aq^{i+1}\right)  ^{p+1})_{q^{p+1}}^{\left(  \alpha-1\right)
}(\left(  aq^{i}\right)  ^{p+1}-a^{p+1})_{q^{p+1}}^{\left(  \lambda\right)  }
&  =0
\end{align*}

Now expand the first integral and use the last remark to see%

\begin{align*}
&  \int\limits_{0}^{x}t^{p}(x^{p+1}-\left(  qt\right)  ^{p+1})_{q^{p+1}%
}^{\left(  \alpha-1\right)  }(t^{p+1}-a^{p+1})_{q^{p+1}}^{\left(
\lambda\right)  }d_{q}t\\
&  =\left(  x^{p+1}\right)  ^{\alpha+\lambda}(1-q)\sum\limits_{i=0}^{\infty
}\left(  q^{i}\right)  ^{\left(  p+1\right)  \left(  \lambda+1\right)
}(1-\left(  q^{i+1}\right)  ^{p+1})_{q^{p+1}}^{\left(  \alpha-1\right)
}\left(  1-\left(  \frac{a}{xq}\right)  ^{p+1}\left(  q^{1-i}\right)
^{p+1}\right)  _{q^{p+1}}^{\left(  \lambda\right)  }\\
&  =\left(  x^{p+1}\right)  ^{\alpha+\lambda}(1-q)\frac{(1-q^{p+1})_{q^{p+1}%
}^{\left(  \alpha-1\right)  }(1-q^{p+1})_{q^{p+1}}^{\left(  \lambda\right)  }%
}{(1-q^{p+1})_{q^{p+1}}^{\left(  \alpha+\lambda\right)  }}\left(  1-\left(
\frac{a}{xq}\right)  ^{p+1}q^{p+1}\right)  _{q^{p+1}}^{\left(  \lambda
+\alpha\right)  }\\
&  =(1-q)\left(  \frac{(1-q^{p+1})_{q^{p+1}}^{\left(  \alpha-1\right)
}(1-q^{p+1})_{q^{p+1}}^{\left(  \lambda\right)  }}{(1-q^{p+1})_{q^{p+1}%
}^{\left(  \alpha+\lambda\right)  }}\right)  \left(  x^{p+1}-a^{p+1}\right)
_{q^{p+1}}^{\left(  \lambda+\alpha\right)  }%
\end{align*}

\begin{remark}
put $a=0$ in the last integral to find $J_{p,q}^{\alpha}\left(  f(t)\right)  $
where $f(t)=t^{\lambda(p+1)}$. In the aid of last lemma we have%
\[
J_{p,q}^{\alpha}\left(  t^{\lambda(p+1)}\right)  =\frac{\Gamma_{q^{p+1}%
}(\alpha)\Gamma_{q^{p+1}}(\lambda+1)}{\left[  p+1\right]  _{q}\Gamma_{q^{p+1}%
}(\lambda+\alpha+1)}x^{(p+1)\left(  \lambda+\alpha\right)  }%
\]

In addition, we interpret logarithm function by limit of expression in remark
3. Hadamard integral operator has the following property: \cite{seri}%
\[
J_{a^{+}}^{\alpha}\left(  \left(  \log\left(  \frac{t}{a}\right)  \right)
^{\lambda}\right)  \left(  x\right)  =\frac{\Gamma(\lambda+1)}{\Gamma
(\lambda+\alpha+1)}\left(  \log\left(  \frac{t}{a}\right)  \right)
^{\lambda+\alpha}%
\]

On the other hand, we have this limit%
\[
\lim_{p\rightarrow-1^{+}}\lim_{q\rightarrow1^{-}}\left(  \frac{(t^{p+1}%
-a^{p+1})_{q^{p+1}}^{\left(  \lambda\right)  }}{\left[  p+1\right]  ^{\left(
\lambda\right)  }}\right)  =\left(  \log\left(  \frac{t}{a}\right)  \right)
^{\lambda}%
\]

Now, in the aid of last lemma, we derive to q-analogue of the property in
\cite{seri}%
\[
J_{a^{+},p,q}^{\alpha}\left(  \frac{(t^{p+1}-a^{p+1})_{q^{p+1}}^{\left(
\lambda\right)  }}{\left[  p+1\right]  ^{\left(  \lambda\right)  }}\right)
=\frac{(1-q)_{q^{p+1}}^{\left(  \alpha-1\right)  }\Gamma_{q}(\lambda
+1)}{(1-q)^{\alpha-1}\Gamma_{q}(\lambda+\alpha+1)\left[  p+1\right]  ^{\left(
\alpha+\lambda\right)  }}\left(  x^{p+1}-a^{p+1}\right)  _{q^{p+1}}^{\left(
\lambda+\alpha\right)  }%
\]

\end{remark}

\begin{proposition}
The given q-fractional integral operator has semi-group property. Means%

\[
J_{p,q}^{\alpha}\left(  J_{p,q}^{\beta}f(x)\right)  =J_{p,q}^{\alpha+\beta
}f(x)
\]

\begin{proof}
Write the left hand side of this identity as%

\begin{align*}
J_{p,q}^{\alpha}\left(  J_{p,q}^{\beta}f(x)\right)   &  =\frac{\left(
1-q\right)  ^{\alpha-1}}{\left(  1-q^{p+1}\right)  _{q^{p+1}}^{\left(
\alpha-1\right)  }}\int\limits_{0}^{x}w^{p}(x^{p+1}-\left(  wq\right)
^{p+1})_{q^{p+1}}^{\left(  \alpha-1\right)  }\left(  J_{p,q}^{\beta
}f(w)\right)  \mathit{d}_{q}\mathit{w}\\
&  =\frac{\left(  1-q\right)  ^{\alpha+\beta-2}}{\left(  1-q^{p+1}\right)
_{q^{p+1}}^{\left(  \alpha-1\right)  }\left(  1-q^{p+1}\right)  _{q^{p+1}%
}^{\left(  \beta-1\right)  }}\int\limits_{0}^{x}w^{p}(x^{p+1}-\left(
wq\right)  ^{p+1})_{q^{p+1}}^{\left(  \alpha-1\right)  }\\
&  \times\left(  \int\limits_{0}^{w}s^{p}f(s)(w^{p+1}-\left(  sq\right)
^{p+1})_{q^{p+1}}^{\left(  \beta-1\right)  }\mathit{d}_{q}\mathit{s}\right)
\mathit{d}_{q}\mathit{w}\\
&  =\frac{\left(  1-q\right)  ^{\alpha+\beta-2}}{\left(  1-q^{p+1}\right)
_{q^{p+1}}^{\left(  \alpha-1\right)  }\left(  1-q^{p+1}\right)  _{q^{p+1}%
}^{\left(  \beta-1\right)  }}\int\limits_{0}^{x}s^{p}f(s)\\
&  \times\left(  \int\limits_{qs}^{x}w^{p}(x^{p+1}-\left(  wq\right)
^{p+1})_{q^{p+1}}^{\left(  \alpha-1\right)  }(w^{p+1}-\left(  sq\right)
^{p+1})_{q^{p+1}}^{\left(  \beta-1\right)  }\mathit{d}_{q}\mathit{w}\right)
\mathit{d}_{q}\mathit{s}%
\end{align*}

Now apply the last lemma to have%

\begin{align*}
J_{p,q}^{\alpha}\left(  J_{p,q}^{\beta}f(x)\right)   &  =\frac{\left(
1-q\right)  ^{\alpha+\beta-2}}{\left(  1-q^{p+1}\right)  _{q^{p+1}}^{\left(
\alpha-1\right)  }\left(  1-q^{p+1}\right)  _{q^{p+1}}^{\left(  \beta
-1\right)  }}\int\limits_{0}^{x}s^{p}f(s)\\
&  \times\left(  (1-q)\left(  \frac{(1-q^{p+1})_{q^{p+1}}^{\left(
\alpha-1\right)  }(1-q^{p+1})_{q^{p+1}}^{\left(  \beta-1\right)  }}%
{(1-q^{p+1})_{q^{p+1}}^{\left(  \alpha+\beta-1\right)  }}\right)  \left[
(x^{p+1}-\left(  sq\right)  ^{p+1})_{q^{p+1}}^{\left(  \alpha+\beta-1\right)
}\right]  \right)  \mathit{d}_{q}\mathit{s}\\
&  \mathit{=}\frac{\left(  1-q\right)  ^{\alpha+\beta-1}}{(1-q^{p+1}%
)_{q^{p+1}}^{\left(  \alpha+\beta-1\right)  }}\int\limits_{0}^{x}%
s^{p}f(s)(x^{p+1}-\left(  sq\right)  ^{p+1})_{q^{p+1}}^{\left(  \alpha
+\beta-1\right)  }\mathit{d}_{q}\mathit{s=}J_{p,q}^{\alpha+\beta}f(x)
\end{align*}

\end{proof}

\begin{definition}
Let $\alpha\geq0$ and $n=\left\lfloor \alpha\right\rfloor +1$ means $n$ is the
smallest integer such that $n\geq\alpha$ and $p>0$. The corresponding
generalized $q-$fractional derivatives is defined by%
\begin{align*}
(D_{p,q}^{0}f)(x)  &  =f(x)\\
(D_{p,q}^{\alpha}f)(x)  &  =(x^{-p}D_{q})^{n}\left(  J_{p,q}^{n-\alpha
}\right)  f(x)=\frac{\left(  \left[  p+1\right]  _{q}\right)  ^{\alpha-n+1}%
}{\Gamma_{q^{p+1}}(n-\alpha)}(x^{-p}D_{q})^{n}\int\limits_{0}^{x}%
w^{p}f(w)(x^{p+1}-\left(  wq\right)  ^{p+1})_{q^{p+1}}^{\left(  n-\alpha
-1\right)  }\mathit{d}_{q}\mathit{w}\text{ \ \ \ \ }%
\end{align*}
if the integral does exist.
\end{definition}
\end{proposition}
\end{proof}
\end{lemma}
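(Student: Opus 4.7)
The plan is to reduce the stated Jackson integral to the summation identity recorded in Remark 6 (i.e., the $q^{p+1}$-rescaled version of the Rajkovi\'c--Marinkovi\'c--Stankovi\'c lemma). The structure will be: (i) split $\int_a^x = \int_0^x - \int_0^a$ using additivity; (ii) observe that the lower integral vanishes identically; (iii) expand the upper integral as an explicit Jackson sum, factor out all $x$-dependence using the product definition (9), and (iv) recognize the remaining sum as an instance of Remark 6.

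For step (ii), I would write $\int_0^a t^p (x^{p+1}-(qt)^{p+1})_{q^{p+1}}^{(\alpha-1)} (t^{p+1}-a^{p+1})_{q^{p+1}}^{(\lambda)}\, d_q t = a(1-q)\sum_{i\geq 0} q^i (aq^i)^p (x^{p+1}-(aq^{i+1})^{p+1})_{q^{p+1}}^{(\alpha-1)} ((aq^i)^{p+1}-a^{p+1})_{q^{p+1}}^{(\lambda)}$ and show each summand is zero. Indeed, applying (9) with $x=(aq^i)^{p+1}$ and $y=a^{p+1}$, the numerator of the infinite product contains the factor $(aq^i)^{p+1} - a^{p+1}(q^{p+1})^{i}=0$ at $k=i$, so $((aq^i)^{p+1}-a^{p+1})_{q^{p+1}}^{(\lambda)}=0$ for every $i\geq 0$.

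For step (iii), expanding $\int_0^x$ as a Jackson sum at the nodes $t=xq^i$, I would apply definition (9) to rewrite
\[
(x^{p+1}-(xq^{i+1})^{p+1})_{q^{p+1}}^{(\alpha-1)} = (x^{p+1})^{\alpha-1}\bigl(1-(q^{p+1})^{1+i}\bigr)_{q^{p+1}}^{(\alpha-1)},
\]
\[
((xq^i)^{p+1}-a^{p+1})_{q^{p+1}}^{(\lambda)} = (x^{p+1})^{\lambda}q^{i(p+1)\lambda}\bigl(1-\mu(q^{p+1})^{1-i}\bigr)_{q^{p+1}}^{(\lambda)},
\qquad \mu:=\bigl(a/(xq)\bigr)^{p+1}.
\]
Together with the Jackson prefactor $(1-q)x\cdot q^i\cdot (xq^i)^p = (1-q)(x^{p+1})^{1}(q^{p+1})^i$, the $x$-powers collect to $(x^{p+1})^{\alpha+\lambda}$ and the $q$-weights combine to $(q^{p+1})^{i(\lambda+1)}$. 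The remaining sum is exactly the left-hand side of Remark 6 with the substitutions $\alpha \leftarrow \lambda+1$ and $\beta \leftarrow \alpha$; applying that identity yields the factor $(1-\mu q^{p+1})_{q^{p+1}}^{(\alpha+\lambda)} = (1-(a/x)^{p+1})_{q^{p+1}}^{(\alpha+\lambda)}$ times the prefactor of beta-type shifted factorials. A final application of (9) in reverse, $(x^{p+1})^{\alpha+\lambda}(1-(a/x)^{p+1})_{q^{p+1}}^{(\alpha+\lambda)} = (x^{p+1}-a^{p+1})_{q^{p+1}}^{(\alpha+\lambda)}$, collapses the expression to the right-hand side.

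The only real obstacle is bookkeeping: the Jackson integrand mixes the variables $x,a,q,t,p$ and one must carefully factor each of the two generalized shifted factorials through (9) so that the variables $x$ and $a$ appear only through the combination $(a/x)^{p+1}$ and so that the $q^i$ weights assemble into $(q^{p+1})^{i\alpha_{\text{new}}}$ with the correct new parameter $\alpha_{\text{new}}=\lambda+1$. Once the substitution $\mu=(a/(xq))^{p+1}$ is chosen (which is what produces the required $(q^{p+1})^{1-i}$ shift), the identification with Remark 6 is forced and the lemma follows. The convergence and boundedness assumptions for the Jackson sums are inherited from the hypothesis $\lambda>-1$ and $\alpha>0$, which ensure the relevant infinite products in (9) are well defined.
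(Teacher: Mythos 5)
Your proposal is correct and is essentially identical to the paper's own proof: the same splitting $\int_a^x=\int_0^x-\int_0^a$, the same vanishing argument for the lower integral (the zero numerator factor at $k=i$ in the product defined by (9)), and the same reduction of the Jackson sum for $\int_0^x$ to the rescaled Rajkovi\'c identity of Remark 6 with $\mu=\left(a/(xq)\right)^{p+1}$ and parameters $(\lambda+1,\alpha)$ in place of $(\alpha,\beta)$, followed by the reverse application of (9) to reassemble $\left(x^{p+1}-a^{p+1}\right)_{q^{p+1}}^{(\lambda+\alpha)}$. Your step (ii) is in fact slightly more explicit than the paper, which merely asserts the vanishing factor without naming the index $k=i$.
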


\-Now we can related the defined q-derivative and q-integral operator as follow%

\[
(D_{p,q}^{\alpha}J_{p,q}^{\alpha}f)(t)\text{ }=(x^{-p}D_{q})^{n}\left(
J_{p,q}^{n-\alpha}\right)  (J_{p,q}^{\alpha}f)(t)=(x^{-p}D_{q})^{n}%
(J_{p,q}^{n}f)(t)
\]
It is easy to see that $(x^{-p}D_{q})^{n}(J_{p,q}^{n}f)(t)=f(t)$. We can prove
it by induction. For instance, let us consider the case that $0<\alpha<1$ in
next proposition:

\begin{proposition}
\bigskip Assume that $0<\alpha<1$ and $p>0$ and integral does exist, then the
following identity holds%
\[
(D_{p,q}^{\alpha}J_{p,q}^{\alpha}f)(x)\text{ }=f(x)
\]

\begin{proof}
Direct calculation of the identity in the aid of lemma (5) shows that%
\begin{align*}
(D_{p,q}^{\alpha}J_{p,q}^{\alpha}f)(x)  &  =\frac{\left(  \left[  p+1\right]
_{q}\right)  }{\Gamma_{q^{p+1}}(\alpha)\Gamma_{q^{p+1}}(1-\alpha)}(x^{-p}%
D_{q})\int\limits_{0}^{x}\int\limits_{0}^{w}w^{p}s^{p}f(s)(w^{p+1}-\left(
sq\right)  ^{p+1})_{q^{p+1}}^{\left(  \alpha-1\right)  }\times\\
&  (x^{p+1}-\left(  wq\right)  ^{p+1})_{q^{p+1}}^{\left(  -\alpha\right)
}\mathit{d}_{q}\mathit{sd}_{q}\mathit{w}\\
&  =\frac{\left(  \left[  p+1\right]  _{q}\right)  }{\Gamma_{q^{p+1}}%
(\alpha)\Gamma_{q^{p+1}}(1-\alpha)}(x^{-p}D_{q})\int\limits_{0}^{x}%
s^{p}f(s)\times\\
&  \left(  \int\limits_{qs}^{x}w^{p}(w^{p+1}-\left(  sq\right)  ^{p+1}%
)_{q^{p+1}}^{\left(  \alpha-1\right)  }(x^{p+1}-\left(  wq\right)
^{p+1})_{q^{p+1}}^{\left(  -\alpha\right)  }\mathit{d}_{q}\mathit{w}\right)
\mathit{d}_{q}\mathit{s}\\
&  =\frac{\left(  \left[  p+1\right]  _{q}\right)  }{\Gamma_{q^{p+1}}%
(\alpha)\Gamma_{q^{p+1}}(1-\alpha)}(x^{-p}D_{q})\int\limits_{0}^{x}%
s^{p}f(s)\left(  \frac{\Gamma_{q^{p+1}}(\alpha)\Gamma_{q^{p+1}}(1-\alpha
)}{\left[  p+1\right]  _{q}}\right)  \mathit{d}_{q}\mathit{s=f(x)}%
\end{align*}

\end{proof}
\end{proposition}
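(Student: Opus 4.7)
My plan is to bypass a direct double-integral computation by invoking the semigroup property established in the preceding proposition, reducing the claim to the $q$-fundamental theorem of calculus.

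Since $0<\alpha<1$ forces $n=\lfloor\alpha\rfloor+1=1$, the definition of the fractional derivative collapses to $(D_{p,q}^{\alpha}g)(x)=x^{-p}D_{q}(J_{p,q}^{1-\alpha}g)(x)$. Applying this with $g=J_{p,q}^{\alpha}f$ and using the semigroup property $J_{p,q}^{1-\alpha}(J_{p,q}^{\alpha}f)=J_{p,q}^{1}f$ (valid since both $\alpha$ and $1-\alpha$ are strictly positive), I get
\[
(D_{p,q}^{\alpha}J_{p,q}^{\alpha}f)(x)=x^{-p}D_{q}(J_{p,q}^{1}f)(x).
\]

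Next I would evaluate $J_{p,q}^{1}f$ explicitly from its integral representation. At $\alpha=1$ the prefactor and kernel simplify: by (9) one has $(x^{p+1}-(wq)^{p+1})_{q^{p+1}}^{(0)}=1$, by (10) one has $[p+1]^{(0)}=1$, and $\Gamma_{q}(1)=1$. Therefore $J_{p,q}^{1}f(x)=\int_{0}^{x}w^{p}f(w)\,d_{q}w$. Applying $x^{-p}D_{q}$ to this and using the $q$-fundamental theorem $D_{q}\int_{0}^{x}F(w)\,d_{q}w=F(x)$ with $F(w)=w^{p}f(w)$ yields $x^{-p}\cdot x^{p}f(x)=f(x)$, which closes the argument.

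The only subtle point is justifying that the normalizing constants and the $q$-shifted factorial kernel genuinely collapse to $1$ at $\alpha=1$; this is a bookkeeping check against the conventions in (7)--(10), and causes no real difficulty. A parallel route, closer to the computation the author already carries out in the excerpt, is to substitute both integrals, interchange the order of $q$-integration via identity (2), and apply Lemma~5 with $\lambda=-\alpha$ to the inner integral in $w$. The Beta-type quotient produced by that lemma cancels the factor $\Gamma_{q^{p+1}}(\alpha)\Gamma_{q^{p+1}}(1-\alpha)/[p+1]_{q}$ in front, reducing the expression to $x^{-p}D_{q}\!\int_{0}^{x}s^{p}f(s)\,d_{q}s$, after which the same $q$-fundamental theorem step finishes the proof. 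The main potential obstacle in this alternative is the legitimacy of the interchange when $\lambda=-\alpha$ is negative, so that the kernel $(x^{p+1}-(wq)^{p+1})_{q^{p+1}}^{(-\alpha)}$ has a non-terminating product representation; this requires checking convergence of the Jackson series, but presents no new difficulty beyond what Lemma~5 already assumes.
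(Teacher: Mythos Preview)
Your proposal is correct. Your primary route---invoking the semigroup property to collapse $J_{p,q}^{1-\alpha}J_{p,q}^{\alpha}$ to $J_{p,q}^{1}$ and then applying the $q$-fundamental theorem---is more economical than what the paper actually does in the proof, which is precisely your ``parallel route'': write out both integral representations, interchange the order of $q$-integration, evaluate the inner $w$-integral via Lemma~5, cancel the resulting Gamma factors against the prefactor, and finish with $x^{-p}D_{q}\int_{0}^{x}s^{p}f(s)\,d_{q}s=f(x)$. (The paper in fact states the semigroup reduction $(x^{-p}D_{q})^{n}J_{p,q}^{n-\alpha}J_{p,q}^{\alpha}=(x^{-p}D_{q})^{n}J_{p,q}^{n}$ in the paragraph immediately preceding the proposition, but then proves the case $n=1$ by the explicit double-integral computation instead.) Your semigroup argument buys brevity and reuses the already-proved Proposition; the paper's explicit computation is self-contained and re-exhibits the Beta-type cancellation of Lemma~5 in this particular instance. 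One minor slip in your sketch of the parallel route: when matching the inner integral to Lemma~5, the exponent $-\alpha$ sits in the slot of Lemma~5's $\alpha-1$ (so Lemma~5's $\alpha$ becomes $1-\alpha$) and Lemma~5's $\lambda$ is $\alpha-1$, not $-\alpha$; this is harmless since $\alpha-1>-1$ still lies in the stated range.
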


In this paper, we defined class of generalized $q-$fractional difference
integral operator and the inverse operator also is defined. A few properties
of these operators were investigated, but still there are a lot of identities
and formulae related to this operator which can be studied. $q$-calculus is
the world of mathematics without limit and the introduced operator can be make
a rule as a part of these objects.

\end{document}